\documentclass[a4paper,10pt]{article}
\usepackage{graphicx} 

\usepackage{orcidlink}
\usepackage{authblk}

\usepackage{amsfonts}
\usepackage{amsmath}
\usepackage{amssymb}
\usepackage{amsthm}

\newtheorem{lemma}{Lemma}
\newtheorem{theorem}{Theorem}
\newtheorem{corollary}{Corollary}
\newtheorem{definition}{Definition}

\newtheorem*{remark}{Remark}
\newtheorem{proposition}{Proposition}

\newcommand{\rrbracket}{\right]\hspace{-0.15em}\right]}
\newcommand{\llbracket}{\left[\hspace{-0.15em}\left[}
\newcommand{\interpret}[1]{\llbracket #1 \rrbracket}

\title{Inexpressibility in Exp-Minus-Log}
\author[1]{Mark Carney \orcidlink{0000-0001-9372-9033} \\ \small mark@quantumvillage.org}
\affil[1]{Quantum Village Inc.}
\date{April 2026}

\begin{document}

\maketitle

\begin{abstract}
    Odrzywo\l{}ek defined a system Exp-Minus-Log (EML) \cite{Odrzywolek2026} that reduces all elementary functions over complex numbers down to a constant `$1$', and a single two place function $E(\alpha, \beta) = \exp(\alpha) - \log(\beta)$. This paper shows that in this system, equivalent to Chow's EL numbers \cite{Chow1999}, every EML-expressible number is computable. We go on to prove that the canonical example of a non-computable real, Chaitin's $\Omega_U$, is inexpressible in EML. This gives a formal inexpressibility theorem for this system.
\end{abstract}

\section{Introduction}

The original paper due to Odrzywo\l{}ek \cite{Odrzywolek2026} presents a way to express all elementary functions, constants, and operations through a single binary operator $E(\alpha, \beta) = \exp(\alpha) - \log(\beta)$ with the constant `$1$'. This is at once surprising and aesthetically pleasing. Indeed, such closed-form number systems are useful in symbolic computation \cite{Chow1999}, CAS systems \cite{Richardson1969}, exact arithmetic \cite{Weihrauch2000}, etc.

Earlier work due to Chow \cite{Chow1999} created the EL numbers that are expressible through the use of $\exp$ and $\log$. Chow introduced the EL numbers to formalize the intuitive notion of ``numbers expressible in closed form'' \cite{Chow1999}. We show an equivalence with EML numbers.

The content of \cite{Odrzywolek2026} is mostly constructive and existential, and leaves open the question about any formal inexpressibility theorem. This paper fills that gap by showing the computability of the EML numbers, and thereby using Chaitin's $\Omega_U$ as the ``canonical'' non-computable witness - left-computably enumerable but not computable - thereby being a witness to inexpressibility in EML. Moreover, we prove this inclusion: 
\begin{align*}
    EML_\mathbb{R} = EL \cap \mathbb{R} \subseteq \text{computable reals} \subsetneq \mathbb{R}
\end{align*}

The paper is structured as follows: \S\ref{sec:prelim} defines EML and $\Omega_U$, and \S\ref{sec:expressibility} proves $EML_\mathbb{R} \subseteq \mathtt{Comp}_\mathbb{R}$ and derives the main inexpressibility result.

\section{Preliminaries}\label{sec:prelim}

\subsection{EML Numbers}

Let us first define Exp-Minus-Log (EML) numbers, and state some basic facts about them, all taken from \cite{Odrzywolek2026}.

\begin{definition}[Syntax of EML]
    Let $\Sigma = \{ \text{`}1\text{'}, \text{`}E\text{'}, \text{`}(\text{'}, \text{`})\text{'}, \text{`},\text{'} \}$. The set $\mathcal{E}$ of closed EML expressions be the smallest set of strings over $\Sigma$ such that `$\ 1$' $\in \mathcal{E}$, and if $\alpha, \beta \in \mathcal{E}$ then `$E(\alpha, \beta)$' $\in \mathcal{E}$
\end{definition}

\begin{definition}[Semantics]\label{def:sem}
    Define $\interpret{\cdot} : \mathcal{E} \rightarrow \mathbb{C}$ recursively:
\begin{align*}
\interpret{1} &= 1 \\
\interpret{E(\alpha, \beta)} &= \exp(\interpret{\alpha}) - \log(\interpret{\beta})
\end{align*}

Here $\exp$ is the exponential function, and $\log$ is the principal-branch complex natural logarithm defined on $\mathbb{C} \setminus \{0\}$ with imaginary part in $(-\pi, \pi]$. Given this, $\interpret{\varphi}$ is defined wherever $\interpret{\beta} \neq 0$ This ensures that $\interpret{\varphi} \in \mathbb{C}$. Let $$ EML = \{ \interpret{\varphi}: \varphi \in \mathcal{E}, \interpret{\varphi} \text{ defined} \} \subseteq \mathbb{C}$$ and let the EML-constructable reals be $EML_\mathbb{R} = EML\ \cap \mathbb{R} $
\end{definition}

As shown in \cite{Odrzywolek2026}, the following elementary constants and functions are expressible in $EML$:
\begin{enumerate}
    \item The integers and rationals,
    \item The constants $e$, $\pi$, $i$, $-1$, with $e = E(1,1)$, etc.
    \item Arithmetic operations $+$, $-$, $\times$, $/$, $x^y$, $\sqrt{x}$, along with the standard transcendental functions $\exp$, $\ln$, $\sin$, $\cos$, $\tan$, as well as the inverse trigonometric and hyperbolic functions. 
\end{enumerate}

The set of EML-Expressible numbers equals the set of complex numbers obtainable by any finite combination of $1$, $\exp$, $\log$ (as defined above), and subtraction. Equivalently, as $+$, $-$, $\times$, and $/$ are derivable, the EML-expressible numbers are the smallest subset of $\mathbb{C}$ containing $1$ closed under those operations with $\exp$ and $\log$. We now show latter is exactly Chow's EL-numbers in \cite{Chow1999}.

\subsection{Chow's EL Numbers}

Chow defines the \emph{EL numbers} in \cite{Chow1999} as follows:

\begin{definition}\label{def:elnums}
    A subfield $F$ of $\mathbb{C}$ is closed under $\exp$ and $\log$ if;  $\exp(x) \in F$ for all $x \in F$, and $\log(x) \in F$ for all nonzero $x \in F$ where $\log()$ is the branch of the natural logarithm function such that for all $x$, $- \pi < \Im(\log x) \leq \pi $. The field $\mathbb{E}$ of \emph{EL numbers} is the intersection of all subfields of $\mathbb{C}$ that are closed under $\exp$ and $\log$. 
\end{definition}

\begin{proposition}
    $$ EML = \mathbb{E} $$
\end{proposition}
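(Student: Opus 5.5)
We prove the two inclusions separately, using the paper's stated fact (from \cite{Odrzywolek2026}) that $+,-,\times,/$ and the constants $0,-1,i$ are EML-expressible.

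\emph{($EML \subseteq \mathbb{E}$).} Let $F$ be any subfield of $\mathbb{C}$ closed under $\exp$ and $\log$ in the sense of Definition~\ref{def:elnums}. We show by structural induction on $\varphi \in \mathcal{E}$ that whenever $\interpret{\varphi}$ is defined, $\interpret{\varphi} \in F$. The base case holds because $1 \in F$, as $F$ is a field. For $\varphi = E(\alpha,\beta)$ with $\interpret{\varphi}$ defined, both $\interpret{\alpha}$ and $\interpret{\beta}$ are defined and $\interpret{\beta} \neq 0$. By induction both lie in $F$, so $\exp(\interpret{\alpha}) \in F$ and $\log(\interpret{\beta}) \in F$; the branch of $\log$ is the same principal branch in Definitions~\ref{def:sem} and~\ref{def:elnums}. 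Their difference lies in $F$ because $F$ is a field. Since $F$ was arbitrary, $EML$ lies in the intersection $\mathbb{E}$.

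\emph{($\mathbb{E} \subseteq EML$).} It suffices to show that $EML$ is itself a subfield of $\mathbb{C}$ closed under $\exp$ and $\log$, since $\mathbb{E}$ is the smallest such field. Closure under $\exp$ follows from $\exp(x) = \interpret{E(\varphi, 1)}$ when $x = \interpret{\varphi}$, since $\log 1 = 0$. For closure under $\log$, given $x = \interpret{\varphi} \neq 0$ we have $\log x = \interpret{E(1,\varphi)}$ minus $e$. Since $e = \interpret{E(1,1)}$, this reduces to closure under subtraction. Field closure then requires explicit compositional gadgets: given expressions $\varphi,\psi$ denoting $x,y$, we need expressions denoting $x-y$, $x+y$, $xy$ and, for $y \neq 0$, $x/y$. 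These must be defined for \emph{all} such inputs, including points where an intermediate logarithm would hit $0$ or cross the branch cut.

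The main obstacle is this last step. The constructions in \cite{Odrzywolek2026}, for example $x - y = \log(\exp x) - \log(\exp y)$-style tricks, or $xy = \exp(\log x + \log y)$, are only valid generically. They can fail when some argument is $0$, as with $xy$ for $x=0$, or through branch issues where $\log \exp z \neq z$ when $\Im z \notin (-\pi,\pi]$.

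I would first build subtraction robustly. One candidate is $x - y = \log(\exp(x))$ adjusted using $\exp(x - y) = \exp(x)/\exp(y)$, which is never zero. Concretely, I would write $x - y = E(\,\cdot\,,\,\cdot\,)$ forms: $\interpret{E(\alpha,\beta)} = e^{a} - \log b$. So to obtain $x-y$, take $a = \log x$ when $x \neq 0$, and $b = \exp y$, then correct the branch error $\log\exp y = y + 2\pi i k$ using the fact that $k$ is an integer and integers and $i\pi$ are expressible. This requires a case split ($x=0$ handled by expressing $-y$ separately, $k$ fixed per input), and a case split is permissible here because we only need \emph{existence} of an expression for each specific pair of numbers, not a uniform formula. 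That observation resolves the difficulty: for each fixed $x,y \in EML$ we may choose the gadget depending on $x,y$, and patch zeros and branch offsets with expressible constants.

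With $-$ in hand, $+$ follows from $x+y = x-(0-y)$, where $0 = \interpret{E(1,E(E(1,1),1))}$. Multiplication uses $xy = \exp(\log x + \log y)$ for $x,y \neq 0$, and $0$ otherwise; division uses $\exp(\log x - \log y)$ with no branch issue, since $\exp$ is single-valued. Hence $EML$ is a field closed under $\exp$ and $\log$, so $\mathbb{E} \subseteq EML$, which completes the proof of the proposition.
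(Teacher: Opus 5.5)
Your first inclusion is correct, and more careful than the paper's one-line ``immediate''. Your doubt about the second inclusion is also well founded. The paper simply quotes Odrzywo\l{}ek's uniform gadgets without checking zeros or branch cuts, and they are only generically valid: the quoted $E(1,E(E(1,x),1))$ gives $-i\pi$ at $x=-1$, and the quoted $x+y$ gadget evaluates $E(1,0)$ at $x=e$.

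However, your repair has a genuine gap at exactly the step you call the main obstacle, because every patch presupposes the closure being proved.
\begin{itemize}
\item You put $a=\log x$ in the first slot of $E$. But you had just reduced closure under $\log$ to closure under subtraction. (Incidentally, $\log x=e-\interpret{E(1,\varphi)}$, not the reverse.)
\item You ``correct the branch error'', but adding $2\pi i k$ to an expressible number is itself an addition.
\item You handle $x=0$ by ``expressing $-y$ separately'', which is the subtraction $0-y$.
\end{itemize}
Non-uniformity lets you choose a different expression for each pair $(x,y)$. It does not produce any expression that avoids $-$, $+$ or $\log$. So the argument is circular, and your $\log$, $+$, $\times$, $/$ steps all inherit this.

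The missing idea is a bootstrap from primitives that need no field operations. The $a$-slot of $E$ only needs some $a$ with $e^{a}=x$, not the principal logarithm. For $\interpret{\omega}=w\neq 0$, the expression $A_\omega=E(1,E(E(1,\omega),1))$ is always defined and has value $\log w$ or $\log w-2\pi i$, so $\exp\interpret{A_\omega}=w$. This gives the one addition-like operation you genuinely have: $\interpret{E(A_\omega,\beta)}=w-\log\interpret{\beta}$ for $w\neq 0$. In particular, $\interpret{E(A_\varphi,E(\psi,1))}=x-y-2\pi i k$ when $x=\interpret{\varphi}\neq 0$ and $y=\interpret{\psi}$.

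Because principal logarithms have imaginary part in $(-\pi,\pi]$, the offset $2\pi i k$ cannot be removed in one step. Instead, iterate shifts by fixed expressible constants:
\begin{itemize}
\item $\interpret{\beta}=-1$ subtracts $i\pi$;
\item $\interpret{\beta}=-i$ adds $i\pi/2$;
\item $\interpret{\beta}=e$ subtracts $1$, and $\interpret{\beta}=e^{-1}$ adds $1$.
\end{itemize}
Route the path so that no intermediate value is $0$, where $A_\omega$ is undefined; the real shifts by $\pm 1$ give a detour when needed. Reduce the case $x=0$ to $1-y$ followed by a shift by $-1$, or use $-1$ directly if $y=1$.

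With this in place your non-uniform strategy goes through, and your later steps for $\log$, $+$, $\times$ and $/$ are fine (for division also note $0/y=0$). As written, though, closure under subtraction is not established.
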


\begin{proof}
    $ EML \subseteq \mathbb{E}$ is immediate from $E$ in Definition \ref{def:sem} - any number expressible in $EML$ is formed of $\exp$, $\log$, and subtraction, and so naturally also in $\mathbb{E}$.

    $\mathbb{E} \subseteq EML$ follows as every number expressible in $\exp$ and $\log$ in Definition \ref{def:elnums}, by \cite{Odrzywolek2026} may be rewritten with the following substitutions:
    \begin{align*}
        \exp(x) &= E(x,1), \\  
        \log(x) &= E(1,E(E(1,x),1))
    \end{align*} along with the following substitutions for field operations, with $1$ already given:
    \begin{align*}
        0 &= \log(1) = E(1, E(E(1,1), 1))\\
        x \times y &= E(E(1,E(E(E(1,E(E(1,E(1,x)),1)),y),1)),1) \\
        x + y &= E(1,E(E(E(1,E(E(1,E(1,E(x,1))),1)),E(y,1)),1)) \\
        -x &= E(E(1,E(E(1,E(1,E(x,1))),1)),E(E(1,1),1)) \\
        x^{-1} &= E(E(E(1,E(E(1,E(1,x)),1)),E(E(1,1),1)),1) \\
    \end{align*} 
    As shown in \cite{Odrzywolek2026}, every elementary function may also be derived in $EML$, and the conditions on $\log$ are the same. As such, we can build the full subfield of $\mathbb{C}$ with the substitutions given above from the field operations 0, 1, $+$, $\times$, and the inverses $-x$ and $x^{-1}$. So each $z \in \mathbb{E}$ is expressible as some $z' \in EML$.
\end{proof}

\subsection{Chaitin's $\Omega_U$}

Let Chaitin's $\Omega$ constants, first defined in \cite{Chaitin1975}, be defined as follows:

\begin{definition}\label{def:omega}
    For any universal prefix-free Turing machine $U$, the halting probability
\begin{align}
    \Omega_U = \sum_{p\in \text{dom}(U)} 2^{-|p|} \in (0,1)
\end{align}
\end{definition}

\begin{theorem}[Chaitin, 1975]\label{thm:omega-non-comp}
    Let $U$ be any universal prefix-free Turing machine, and $\Omega_U$ defined per Def. \ref{def:omega}. $\Omega_U$ is a real number that is left-computably enumerable, but non-computable: no Turing machine, on input $k$, will output a rational within $2^{-k}$ of $\Omega_U$ for all $k$.
\end{theorem}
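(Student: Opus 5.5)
We prove the two halves separately: left-computable enumerability, and then non-computability, which is the substantive part.

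\emph{Left-c.e.} Fix a computable enumeration of pairs $(p,t)$ and dovetail the simulation of $U$ on all programs $p$. Let $\Omega_s = \sum 2^{-|p|}$, taken over those $p$ seen to halt within $s$ steps. Each $\Omega_s$ is a rational computable from $s$. Since $U$ is prefix-free, Kraft's inequality gives $\Omega_U \le 1$. Because programs only ever get added, the sequence $(\Omega_s)$ is nondecreasing and converges to $\Omega_U$ from below. Moreover $\Omega_U>0$, since $U$ halts on at least one program. We also have $\Omega_U<1$: universality lets us arrange a non-halting program, so some prefix-free extension is missing. This gives membership in $(0,1)$ as written in Definition \ref{def:omega}.

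\emph{Non-computability.} Suppose for contradiction that some machine $M$, on input $k$, outputs a rational $q_k$ with $|q_k-\Omega_U|<2^{-k}$. We first use $M$ to decide halting for every program of length $\le n$. Compute $q_{n+1}$ and set $r = q_{n+1}-2^{-(n+1)}$, so that $\Omega_U - 2^{-n} < r < \Omega_U$. Run the enumeration until a stage $s$ with $\Omega_s > r$; such a stage exists because $\Omega_s \uparrow \Omega_U > r$. Any program of length $\le n$ that halts after stage $s$ would add at least $2^{-n}$ to the sum, giving $\Omega_U \ge \Omega_s + 2^{-n} > r+2^{-n} > \Omega_U$, which is impossible. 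So the programs of length $\le n$ that have halted by stage $s$ are exactly those that halt.

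We then turn this into a contradiction via Kolmogorov complexity, or equivalently by diagonalization. Let $g(n)$ output the least string (in a fixed order) that is not the output of any halting program of length $\le n$. Then $g$ is total computable, and by construction $K(g(n)) > n$. On the other hand, $U$ can simulate a fixed machine computing $g$ given a prefix-free code for $n$, so $K(g(n)) \le 2\log n + c$. For large $n$ these two bounds contradict each other. As a simpler alternative, we can skip complexity entirely: deciding halting of $U$ on all programs of length $\le n$, for every $n$, decides the halting problem for $U$. By universality that would solve the ordinary halting problem, contradicting Turing's theorem.

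The main obstacle is the halting-decision step, specifically pinning down the approximation inequality so that any late-halting short program provably overshoots $\Omega_U$. It has to be checked carefully with the strict and non-strict bounds above, and the prefix-free property is what guarantees that each halting program contributes its full weight $2^{-|p|}$ with no overlap. Since the statement is classical (\cite{Chaitin1975}), in the paper one could also simply cite it, but the sketch above is the self-contained argument.
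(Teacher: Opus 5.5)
Your proposal is correct and follows essentially the same route as the paper, which cites \cite{Chaitin1975} and \cite[Thm.~3.6.9]{Downey2010-wc} and singles out your key step: approximating $\Omega_U$ well enough lets one dovetail until the enumerated mass exceeds $\Omega_U-2^{-n}$. This decides halting for all programs of length $\le n$, contradicting the undecidability of the halting problem. Two minor points: your argument for $\Omega_U<1$ is loose, since one non-halting program does not force missing measure, but it is unnecessary because $\Omega_U\le 1$ and non-computability already exclude $\Omega_U=1$; and if ``within $2^{-k}$'' is read non-strictly, take $r=q_{n+2}-2^{-(n+1)}$ so that $\Omega_U-2^{-n}<r<\Omega_U$ and the search still terminates.
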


\begin{proof}
    See \cite{Chaitin1975} and \cite[Thm. 3.6.9]{Downey2010-wc}.
\end{proof}

\begin{remark}[Key Step in proof of Thm. \ref{thm:omega-non-comp}]
    An oracle for the first $n$-many bits of $\Omega_U$ allows one to solve the halting problem for all programs of length $\leq n$ by dovetailing, contradicting computability.
\end{remark}

\section{$EML$ Expressibility}\label{sec:expressibility}

Now we build our reasoning towards inexpressibility within EML. To give a brief overview; first, we note the countability of $\mathcal{E}$, and then utilise facts from computable analysis \cite{Weihrauch2000} and effective lower bounds (Lemma \ref{lemma:log-bounds}) to show in Theorem \ref{thm:comp-eml-values} that all EML expressible values are computable, and then in Theorem \ref{thm:main} that Chaitin's $\Omega_U \notin \mathcal{E}$. This section is implicit in \cite{Chow1999} and is itself derivative of \cite[\S9]{TZ2009}.

\begin{lemma}[Countability of Syntax]\label{lemma:comp-syntax}
    $\mathcal{E}$ is countably infinite.
\end{lemma}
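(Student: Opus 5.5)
To prove Lemma \ref{lemma:comp-syntax}, I will establish two inequalities: $\mathcal{E}$ is at most countable, and $\mathcal{E}$ is infinite.

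\emph{At most countable.} By the syntax definition, $\mathcal{E}$ is a set of strings over the finite alphabet $\Sigma$, so $\mathcal{E} \subseteq \Sigma^*$. The set $\Sigma^*$ is countable, since $\Sigma^* = \bigcup_{n \ge 0} \Sigma^n$ and each $\Sigma^n$ is finite (it has $5^n$ elements). Concretely, I would fix an ordering of $\Sigma$ and enumerate $\Sigma^*$ in shortlex order (by length, then lexicographically). This gives an injection $\Sigma^* \hookrightarrow \mathbb{N}$, and restricting it to $\mathcal{E}$ shows that $\mathcal{E}$ is at most countable.

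\emph{Infinite.} I will exhibit an injective sequence in $\mathcal{E}$. Let $\varphi_0 = 1$ and $\varphi_{n+1} = E(\varphi_n, 1)$. By the closure clause in the definition of $\mathcal{E}$, each $\varphi_n \in \mathcal{E}$. By induction, the string length satisfies $|\varphi_{n+1}| = |\varphi_n| + 5$: the construction adds the symbols `$E$', `$($', `$,$', `$1$', `$)$'. Hence $|\varphi_n| = 5n + 1$. The lengths are therefore pairwise distinct, so the $\varphi_n$ are pairwise distinct strings. This gives an injection $\mathbb{N} \hookrightarrow \mathcal{E}$.

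Together the two injections give $|\mathcal{E}| = \aleph_0$, by Cantor--Schröder--Bernstein, or simply because an infinite subset of a countable set is countably infinite.

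For the later use in Theorem \ref{thm:comp-eml-values}, I would also note that membership in $\mathcal{E}$ is decidable. A recursive-descent parser decides it, since the grammar is unambiguous: $E$ is followed by a parenthesised pair split at the unique top-level comma. So the shortlex enumeration of $\Sigma^*$, filtered through this parser, gives a computable enumeration of $\mathcal{E}$. This step is not needed for the cardinality claim itself.

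There is no real obstacle here. The only point needing care is that distinct strings are counted as distinct elements, since $\mathcal{E}$ is a set of syntactic objects and not of values. The semantic map $\interpret{\cdot}$ may identify many expressions, but that does not affect the cardinality of $\mathcal{E}$.
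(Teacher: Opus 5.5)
Your proposal is correct and takes essentially the same route as the paper: countability comes from $\mathcal{E} \subseteq \Sigma^*$, and infinitude comes from iterating the $E$-constructor. You make the infinitude step more precise than the paper does by giving the explicit chain $\varphi_{n+1} = E(\varphi_n,1)$ with pairwise distinct lengths $5n+1$. Your decidability aside is correct but not needed; the paper's Theorem~\ref{thm:comp-eml-values} is explicitly pointwise and never uses a uniform enumeration of $\mathcal{E}$.
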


\begin{proof}
    Each $\varphi \in \mathcal{E}$ has finite length over a finite alphabet. Thus, the set of strings of arbitrary length is countable. The presence of `$1$' indicates that $\mathcal{E} \neq \emptyset$, and iterating `$E$' in line with Def. \ref{def:sem} produces infinitely many distinct expressions. 
\end{proof}

\begin{corollary}
    The set of non-EML-expressible reals has full Lebesgue measure on $\mathbb{R}$, given that it is $\mathbb{R}$ minus a countable set. 
\end{corollary}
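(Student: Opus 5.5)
The plan is to show that the set of EML-expressible reals $EML_\mathbb{R}$ is countable and then invoke the standard fact that countable subsets of $\mathbb{R}$ are Lebesgue null. Lemma \ref{lemma:comp-syntax} says $\mathcal{E}$ is countable. The evaluation map $\interpret{\cdot}$ from Definition \ref{def:sem} is a partial function from $\mathcal{E}$ to $\mathbb{C}$. So $EML$, being the image of its domain, is countable: an enumeration $\varphi_0, \varphi_1, \dots$ of $\mathcal{E}$ yields an enumeration (with repetitions and gaps) of $EML$. Intersecting with $\mathbb{R}$ preserves countability, so $EML_\mathbb{R}$ is countable.

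Next I will show directly that a countable set $\{x_0, x_1, \dots\} \subseteq \mathbb{R}$ has outer Lebesgue measure zero. Given $\varepsilon > 0$, cover each $x_n$ by the open interval $(x_n - \varepsilon 2^{-n-2},\, x_n + \varepsilon 2^{-n-2})$. This interval has length $\varepsilon 2^{-n-1}$, so the total length of the cover is at most $\sum_n \varepsilon 2^{-n-1} = \varepsilon$. Since $\varepsilon$ is arbitrary, $\lambda(EML_\mathbb{R}) = 0$. Countable sets are Borel, being countable unions of closed singletons, so $EML_\mathbb{R}$ is measurable and so is its complement.

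It follows that $\mathbb{R} \setminus EML_\mathbb{R}$ is measurable and that for every bounded interval $I$,
\begin{align*}
\lambda(I \setminus EML_\mathbb{R}) = \lambda(I) - \lambda(I \cap EML_\mathbb{R}) = \lambda(I).
\end{align*}
The complement therefore has full measure, meaning its complement is null, and also $\lambda(\mathbb{R} \setminus EML_\mathbb{R}) = \infty$. In particular it is nonempty, indeed uncountable. This is the sense of ``full Lebesgue measure'' intended.

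There is no real obstacle here. The only care needed is that $\interpret{\cdot}$ is partial (undefined when a $\log$ argument is $0$), so the countability claim is phrased as ``image of a subset of a countable set''. Also, the statement concerns reals while $EML \subseteq \mathbb{C}$, so one must pass through $EML \cap \mathbb{R}$ rather than treating $EML$ as a subset of $\mathbb{R}$.
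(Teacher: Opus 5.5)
Your proposal is correct and matches the paper's one-line justification: by Lemma~\ref{lemma:comp-syntax} the set of non-expressible reals is $\mathbb{R}$ minus a countable set, and countable sets are Lebesgue-null. You also spell out what the paper leaves implicit, namely the partiality of the evaluation map, the passage from $EML \subseteq \mathbb{C}$ to $EML_\mathbb{R}$, and the $\varepsilon 2^{-n}$ covering argument, and you handle each of these correctly.
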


\begin{lemma}[Computability of $\exp$ and $\log$ on computable complex numbers]\label{lemma:comp-exp-log}
    If $z$ is a computable complex number, then $\exp(z)$ is a computable complex number. 

    Similarly, if $z$ is a computable complex number with $z\neq 0$ and given a positive computable lower bound $\delta > 0$ with $|z|\geq \delta$, then $\log(z)$ is a computable complex number.
\end{lemma}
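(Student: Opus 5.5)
The plan is to work with the standard representation of a computable complex number $z = x + iy$ as a Turing machine that, on input $k$, outputs a Gaussian rational $q_k$ with $|q_k - z| \le 2^{-k}$. For each of $\exp$ and $\log$ I would produce such a machine for the output, using truncated power series together with explicit modulus-of-continuity bounds, in the style of \cite{Weihrauch2000}.

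\emph{Exponential.} First compute a rational bound $M \ge |z| + 1$ from $q_0$. On $\{ |w| \le M\}$ the exponential is Lipschitz with constant $e^{M}$, so an input error of $2^{-m}$ produces an output error of at most $e^M 2^{-m}$. The Taylor tail satisfies
\[
\Bigl|\sum_{n > N} w^n/n!\Bigr| \le \frac{M^{N+1}}{(N+1)!}\, e^{M}.
\]
Given $k$, choose $m$ with $e^M 2^{-m} \le 2^{-k-1}$. Then choose $N$ so that the tail bound at $w = q_m$ is at most $2^{-k-1}$. Output the rational $\sum_{n \le N} q_m^n/n!$. Both choices are effective because $M$ is rational, so this step is routine.

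\emph{Logarithm.} Write $\log z = \ln|z| + i\,\mathrm{Arg}(z)$ and treat the two parts separately.
\begin{enumerate}
\item \emph{Modulus.} $|z|^2 = x^2 + y^2$ is computable, hence so is $|z|$. On $[\delta, \infty)$ the function $\ln$ is Lipschitz with constant $1/\delta$. So after approximating $|z|$ to within $\delta 2^{-k-1}$ (and clamping the approximation to be at least $\delta$), I reduce to computing $\ln r$ for rational $r \ge \delta$. For that I would write $r = 2^j s$ with $s \in [1/2, 1]$ and use $\ln r = j \ln 2 + \ln s$. The term $\ln s$ comes from the series $\ln s = 2\,\mathrm{artanh}\bigl((s-1)/(s+1)\bigr)$, whose ratio $|(s-1)/(s+1)| \le 1/3$ gives geometric error control. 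This is exactly where the lower bound $\delta$ is used: it provides the modulus of continuity.
\item \emph{Argument.} I would compute $\arctan$ on rationals by its series after argument reduction. Wherever $\mathrm{Arg}$ is continuous, i.e.\ on $\mathbb{C} \setminus (-\infty, 0]$, it is then computable with a modulus depending on $\delta$. Concretely I would use the formula $\mathrm{Arg}(z) = 2\arctan\bigl(y/(|z| + x)\bigr)$, valid off the closed negative real axis.
\end{enumerate}

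The main obstacle is the branch cut. $\mathrm{Arg}$ jumps from $\pi$ to $-\pi$ across $(-\infty, 0)$, and no algorithm can decide from approximations whether $y = 0$, $y > 0$, or $y < 0$. So no uniform algorithm exists near the cut. The way around this is that the lemma only asserts that $\log(z)$ is a computable number, not that $\log$ is a computable operator, so a non-uniform case split is legitimate. I would split on the true value of $z$:
\begin{enumerate}
\item If $z$ is not on the negative real axis, the formula $2\arctan\bigl(y/(|z|+x)\bigr)$ applies. Its denominator is bounded away from $0$ once we know how far $z$ is from the cut, and such a rational positive bound exists non-uniformly.
\item If $z = x < 0$ is real, then $\mathrm{Arg}(z) = \pi$ and $\log z = \ln|x| + i\pi$. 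This is computable because $\pi$ is.
\end{enumerate}
In either case a machine exists, which is all the statement requires. I would add a remark that the uniform version fails exactly at the cut. This point matters for the later Theorem~\ref{thm:comp-eml-values}, where the same non-uniform choice will have to be made at each $\log$ node of an expression.
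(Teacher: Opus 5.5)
Your proof is correct, but it takes a different route from the paper. The paper argues entirely by citation. For $\exp$ it uses Weihrauch's theorem that an analytic function with a computable power series is computable inside its disc of convergence. For $\log$ it uses Tent--Ziegler's result that the principal logarithm is (lower-elementary) computable on the slit plane $\mathbb{C}\setminus(-\infty,0]$, via an integral representation. You instead build the algorithms directly. For $\exp$ you use a truncated Taylor series with explicit Lipschitz and tail bounds. For $\log$ you use the decomposition $\log z = \ln|z| + i\,\mathrm{Arg}(z)$, with the artanh series for $\ln$ and the half-angle formula $\mathrm{Arg}(z) = 2\arctan\bigl(y/(|z|+x)\bigr)$ for the argument. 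Your version is self-contained and does two things the paper's proof does not. First, it actually uses the hypothesis $|z|\ge\delta$, as the Lipschitz constant $1/\delta$ of $\ln$; the paper states this hypothesis but never invokes it. Second, it covers $z\in(-\infty,0)$ inside the lemma. The cited slit-plane result does not apply there, so the paper's proof of the lemma as literally stated depends on the separate case split in Theorem~\ref{thm:comp-eml-values}. The citation route buys brevity and, via Tent--Ziegler, a complexity bound for $\log$ on the slit plane.

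One small correction. The modulus of continuity of $\mathrm{Arg}$ is governed by the distance from $z$ to the cut, not by $\delta$ (consider $z=-1+i\varepsilon$, which has $|z|\ge 1$ for every $\varepsilon$). Your case split in fact recognises this. Also, in the off-cut case you do not need a non-uniformly supplied bound. A positive rational lower bound for the computable positive real $|z|+x$ can be found by search. So the only genuinely non-uniform step is deciding which case holds.
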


\begin{proof} 
These are standard results in computable analysis. By \cite[Thm.~6.5.2]{Weihrauch2000}, a complex analytic function with a computable power series at a computable centre is computable on any effectively given closed subset of its disc of convergence. Applied to $\exp(z) = \sum_{n \geq 0} z^n / n!$ at the origin, with infinite radius of convergence, \cite[Thm.~6.5.2]{Weihrauch2000} gives that $\exp$ is computable at every computable complex input.

The computability of $\log$ for computable input $z$ is established in Tent–Ziegler \cite[\S3–4]{TZ2009}, where the principal branch logarithm is shown to be lower-elementary computable on the slit plane $\mathbb{C} \setminus (-\infty, 0]$ via an integral representation, and lower-elementary computable functions are computable. Subtraction of computable complex numbers is computable by \cite[Thm. 4.3.9]{Weihrauch2000}.
\end{proof}

\begin{lemma}[Bounds on $\log$-arguments]
\label{lemma:log-bounds}
    Let $\varphi \in \mathcal{E}$ be a closed EML expression with $[\![\varphi]\!]$ defined. Then for every sub-expression of $\varphi$ of the form $E(\alpha, \beta)$, either $[\![\beta]\!] \in (-\infty, 0)$, or there exists a rational $\varepsilon_\beta > 0$ with $$\mathrm{dist}([\![\beta]\!], (-\infty, 0]) \geq \varepsilon_\beta$$
\end{lemma}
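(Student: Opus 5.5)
The statement is essentially a dichotomy about the location of the complex number $b := \interpret{\beta}$, so I will argue pointwise for each sub-expression $E(\alpha,\beta)$ of $\varphi$. First I use the hypothesis that $\interpret{\varphi}$ is defined. By the recursive clause of Definition~\ref{def:sem}, this forces every sub-expression to be defined, and in particular $b \neq 0$ for every sub-expression $E(\alpha,\beta)$. This follows by a straightforward induction on the structure of $\varphi$: if some inner $\log$ argument vanished, the enclosing value would be undefined, and undefinedness propagates upward.

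Next I split into cases according to where the nonzero complex number $b$ lies relative to the closed ray $(-\infty,0]$.

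\textbf{Case 1: $b \in (-\infty,0)$.} Then $b$ is a negative real, and the first alternative of the lemma holds outright.

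\textbf{Case 2: $b \notin (-\infty,0]$.} The set $(-\infty,0]$ is closed in $\mathbb{C}$, and $b$ lies outside it, so $d := \mathrm{dist}(b,(-\infty,0]) > 0$. Concretely, $d = |\Im b|$ if $\Re b \le 0$, and $d = |b|$ if $\Re b > 0$. By density of $\mathbb{Q}$ in $\mathbb{R}$, I pick a rational $\varepsilon_\beta$ with $0 < \varepsilon_\beta \le d$, for instance $\varepsilon_\beta = 2^{-m}$ with $m$ large. This gives the second alternative. Since the case $b = 0$ was excluded in the first paragraph, the two cases are exhaustive.

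The only delicate point is interpretive rather than technical. As literally stated, the lemma asserts only the \emph{existence} of a rational bound, which is immediate from closedness and density. For the lemma's intended use in Theorem~\ref{thm:comp-eml-values}, via Lemma~\ref{lemma:comp-exp-log}, one also wants such a bound that is usable effectively.

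I would handle this in the downstream argument. The $\varepsilon_\beta$ can be hard-coded as finite advice, since $\varphi$ has only finitely many sub-expressions. Alternatively, given a computable approximation scheme for $b$, one can search for a rational approximation $q$ with $|q-b| < 2^{-k}$ and $\mathrm{dist}(q,(-\infty,0]) > 2 \cdot 2^{-k}$. This search terminates precisely because $d > 0$, and it yields $\varepsilon_\beta = 2^{-k}$.

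Case 1 needs separate treatment later, because there the principal $\log$ is discontinuous. It is handled by $\log b = \log|b| + i\pi$ with $|b| > 0$, so it too needs only a positive lower bound, namely on $|b|$. However, deciding which case holds is not effective in general, so I would make the case choice part of the finite advice attached to $\varphi$, rather than claim a uniform algorithm.
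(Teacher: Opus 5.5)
Your proof is correct and follows essentially the same route as the paper: definedness of $\interpret{\varphi}$ forces $\interpret{\beta}\neq 0$; you then split on whether $\interpret{\beta}$ lies on $(-\infty,0)$; otherwise the distance to the closed ray $(-\infty,0]$ is positive, and density of $\mathbb{Q}$ gives the rational $\varepsilon_\beta$. Your extra remarks, on finding $\varepsilon_\beta$ effectively and on the case choice being non-uniform, go beyond what the lemma asserts, but they are sound and match the paper's remark following Theorem~\ref{thm:comp-eml-values}.
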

\begin{proof}
    The expression $\varphi$ is finite, so it has finitely many sub-expressions of the form $E(\alpha, \beta)$. Fix one. Since $\interpret{\varphi}$ is defined, $\interpret{\beta} \neq 0$ (Def. \ref{def:sem}). If $\interpret{\beta} \in (-\infty, 0)$ we are done. Otherwise $\interpret{\beta} \in \mathbb{C} \setminus (-\infty, 0]$, which is open, so $\mathrm{dist}(\interpret{\beta}, (-\infty, 0]) > 0$, and density of $\mathbb{Q}$ supplies a rational $\varepsilon_\beta \in (0, \mathrm{dist}(\interpret{\beta}, (-\infty, 0]))$.
\end{proof}

Let $\mathtt{Comp}_\mathbb{R}$ denote the computable reals, $\mathtt{Comp}_\mathbb{C}$ the computable complex numbers, and recall $EML_\mathbb{R} = EML\ \cap \mathbb{R}$. 

\begin{theorem}[Computability of EML values]
\label{thm:comp-eml-values}
    For every $\varphi \in \mathcal{E}$ with $\interpret{\varphi}$ defined, $\interpret{\varphi}$ is a computable complex number. In particular, $\mathrm{EML}_\mathbb{R} \subseteq \mathrm{Comp}_\mathbb{R}$.
\end{theorem}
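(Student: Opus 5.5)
We argue by structural induction on $\varphi \in \mathcal{E}$, proving the slightly stronger statement that for every sub-expression $\psi$ of $\varphi$, $\interpret{\psi}$ is a computable complex number. Definedness of $\interpret{\varphi}$ propagates downward, so every sub-expression of $\varphi$ is also defined. The base case is $\interpret{1} = 1$, which is trivially computable. For the inductive step, take $\psi = E(\alpha,\beta)$ with $\interpret{\alpha}, \interpret{\beta} \in \mathtt{Comp}_\mathbb{C}$ by hypothesis. Lemma \ref{lemma:comp-exp-log} gives $\exp(\interpret{\alpha}) \in \mathtt{Comp}_\mathbb{C}$. Once we know $\log(\interpret{\beta}) \in \mathtt{Comp}_\mathbb{C}$, closure of the computable complex numbers under subtraction \cite[Thm.~4.3.9]{Weihrauch2000} finishes the step. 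The final clause follows because a real number is computable as a complex number exactly when its real part is computable, so $EML_\mathbb{R} \subseteq \mathtt{Comp}_\mathbb{R}$.

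The only real work is computing $\log(\interpret{\beta})$, and we split according to the dichotomy of Lemma \ref{lemma:log-bounds}. The key observation is non-uniformity: we only need \emph{some} algorithm for each fixed $\varphi$, not one uniform in $\varphi$. So the finitely many rationals $\varepsilon_\beta$ may be hard-coded into the machine.

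In the case $\mathrm{dist}(\interpret{\beta},(-\infty,0]) \geq \varepsilon_\beta$, the point lies in a closed region of the slit plane that is effectively separated from the branch cut. There we apply the computability of the principal logarithm on $\mathbb{C}\setminus(-\infty,0]$ from Tent--Ziegler \cite{TZ2009}, as cited in Lemma \ref{lemma:comp-exp-log}. The constant $\varepsilon_\beta$ also provides the lower bound $|\interpret{\beta}| \geq \varepsilon_\beta$ that Lemma \ref{lemma:comp-exp-log} requires.

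In the case $\interpret{\beta} = -r$ with $r>0$, we write $\log(\interpret{\beta}) = \log r + i\pi$. Here $r = -\interpret{\beta}$ is computable, and $\log r$ is computable because $r$ is a positive real bounded away from $0$ and the cut by a hard-coded rational. The constant $\pi$ is computable. This case needs care because $\log$ is discontinuous on the cut: approximations of $\interpret{\beta}$ may carry a small negative imaginary part, which would push a naive evaluation to the wrong branch. Knowing non-uniformly that $\interpret{\beta}$ is exactly a negative real lets us use the formula directly and avoid that discontinuity.

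The main obstacle is exactly this branch-cut issue. We cannot decide effectively whether a computable complex number lies on $(-\infty,0)$, so the case split must be carried by non-uniform advice rather than by computation. I would make this explicit by stating the induction as an existence claim: for each defined $\varphi$ there exists a Turing machine computing $\interpret{\varphi}$. I would also note that uniformity over all of $\mathcal{E}$ is neither claimed nor needed for the inclusion into $\mathtt{Comp}_\mathbb{R}$, and hence for the inexpressibility of $\Omega_U$ that follows via Theorem \ref{thm:omega-non-comp}.
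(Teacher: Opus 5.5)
Your proposal is correct and follows essentially the same route as the paper. Both use structural induction, get $\exp$ from Lemma~\ref{lemma:comp-exp-log}, split cases via Lemma~\ref{lemma:log-bounds} (handling the negative real axis by $\log|\cdot| + i\pi$), and finish with closure under subtraction; your framing of that case split as non-uniform advice matches the paper's remark after the theorem that only pointwise, not uniform, computability is claimed.
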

\begin{proof}
    By induction on $\varphi$.

    \textbf{Base:} $[\![1]\!] = 1 \in \mathbb{Q} \subset \mathrm{Comp}_\mathbb{C}$.

    \textbf{Step:} Let $\varphi = E(\alpha, \beta)$ with $\interpret{\varphi}$ defined. Then $\interpret{\alpha}$ and $\interpret{\beta}$ are defined and, by induction, computable. By Lemma~\ref{lemma:comp-exp-log}, $\exp(\interpret{\alpha})$ is computable. By Lemma~\ref{lemma:log-bounds}, either $\interpret{\beta} \in (-\infty, 0)$ or there is a rational $\varepsilon_\beta > 0$ such that $\mathrm{dist}(\interpret{\beta}, (-\infty, 0]) \geq \varepsilon_\beta$. We then have two cases:
    \begin{enumerate}
        \item If $\interpret{\beta} \in (-\infty, 0)$, (\emph{i.e.} on the principal branch cut) then $$\log(\interpret{\beta}) = \log|\interpret{\beta}| + i\pi$$ Since $|\interpret{\beta}|$ is a positive computable real, $\log|\interpret{\beta}|$ is computable by \cite[Thm.~6.5.2]{Weihrauch2000} applied to $\log$ on $\mathbb{R}_{>0}$, and adding $i\pi$ preserves computability.
        \item Otherwise, by Lemma~\ref{lemma:comp-exp-log} we have that $\log(\interpret{\beta})$ is computable.
    \end{enumerate}
    Subtraction is computable, so $\interpret{\varphi} = \exp(\interpret{\alpha}) - \log(\interpret{\beta})$ is computable.
\end{proof}

It should be noted that Theorem \ref{thm:comp-eml-values} in no way claims to be \emph{uniformly} computable - in fact, the central disjunction is not computable (not even r.e.), \emph{i.e.} there is not a singular Turing machine that will perform this computation. This is fine for the pointwise arguments here, but needs to be noted as a limitation.

\begin{theorem}\label{thm:main}
$$\Omega_U \notin EML_\mathbb{R}$$
\end{theorem}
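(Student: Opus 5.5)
The proof will be by contradiction, combining Theorem~\ref{thm:comp-eml-values} with Theorem~\ref{thm:omega-non-comp}. Suppose $\Omega_U \in EML_\mathbb{R}$. By Definition~\ref{def:sem}, there is then some closed expression $\varphi \in \mathcal{E}$ with $\interpret{\varphi}$ defined and $\interpret{\varphi} = \Omega_U$. This is a single, fixed, finite expression. Everything that follows is pointwise for this $\varphi$, so no uniformity over $\mathcal{E}$ is needed. This matters because of the remark following Theorem~\ref{thm:comp-eml-values}: the case distinction in Lemma~\ref{lemma:log-bounds} is not decidable in general.

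Next, apply Theorem~\ref{thm:comp-eml-values} to $\varphi$. This gives that $\interpret{\varphi}$ is a computable complex number, so there is a Turing machine $M$ that on input $k$ outputs a Gaussian rational $q_k = a_k + i b_k$ with $|q_k - \interpret{\varphi}| \leq 2^{-k}$. Since $\interpret{\varphi} = \Omega_U$ is real, $|a_k - \Omega_U| \leq |q_k - \Omega_U| \leq 2^{-k}$. The machine $M'$ that runs $M$ and outputs only the real part $a_k$ is therefore a Turing machine which, on input $k$, outputs a rational within $2^{-k}$ of $\Omega_U$. In other words $\Omega_U \in \mathtt{Comp}_\mathbb{R}$, which is the inclusion $EML_\mathbb{R} \subseteq \mathtt{Comp}_\mathbb{R}$ stated in Theorem~\ref{thm:comp-eml-values}. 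This directly contradicts Theorem~\ref{thm:omega-non-comp}, which holds for every universal prefix-free $U$, so $\Omega_U \notin EML_\mathbb{R}$.

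There is essentially no hard step here, since all the analytic content has already been placed in Lemmas~\ref{lemma:comp-exp-log} and \ref{lemma:log-bounds} and Theorem~\ref{thm:comp-eml-values}. The only points requiring care are bookkeeping:
\begin{enumerate}
\item The notion of computable complex number must match the real notion used in Theorem~\ref{thm:omega-non-comp}. This is handled by the real-part projection above: computing a complex number is the same as computing its real and imaginary parts, and the real part of a number within $2^{-k}$ is itself within $2^{-k}$.
\item The argument must stay non-uniform. Fixing one $\varphi$, and hence one finite set of branch-cut cases and rational bounds $\varepsilon_\beta$ hardcoded into $M$, is what makes the application of Theorem~\ref{thm:comp-eml-values} legitimate.
\end{enumerate}

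Finally, the statement $\Omega_U \notin EML$ also holds as a complex number, by the same argument.
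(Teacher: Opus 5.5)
Your proof is correct and takes the same route as the paper: it combines the inclusion $EML_\mathbb{R} \subseteq \mathtt{Comp}_\mathbb{R}$ from Theorem~\ref{thm:comp-eml-values} with the non-computability of $\Omega_U$ from Theorem~\ref{thm:omega-non-comp}. Your real-part projection only makes explicit the ``in particular'' step of Theorem~\ref{thm:comp-eml-values}, which the paper takes for granted.
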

\begin{proof}
    By Theorem \ref{thm:comp-eml-values}, $EML_\mathbb{R} \subseteq \mathtt{Comp}_\mathbb{R}$, and by Theorem \ref{thm:omega-non-comp}, $\Omega_U \in \mathbb{R} \setminus \mathtt{Comp}_\mathbb{R}$. Hence $\Omega_U \notin EML_\mathbb{R}$. Similarly, for any non-computable real $r$, $r\notin \mathtt{Comp}_\mathbb{R} \supseteq EML_\mathbb{R}$, therefore $\nexists \varphi \in \mathcal{E}$ such that $\interpret{\varphi} = r$.
\end{proof}
    
\begin{corollary}
    The set of non-computable reals, which is of full Lebesgue measure, is entirely disjoint from EML. In addition to the above, this includes natural sub-classes such as:

    \begin{itemize}
        \item The binary expansion of the halting set $\{ 0.\chi_{HALT}(1)\chi_{HALT}(2)\chi_{HALT}(3)\ldots \}$
        \item The limits of Specker sequences (see \cite{Weihrauch2000})
        \item Any random real in the sense of Martin-L\"of (see \cite{Downey2010-wc}).
    \end{itemize}
\end{corollary}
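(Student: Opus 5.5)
The corollary follows from Theorem \ref{thm:comp-eml-values}, whose contrapositive says: if $r \in \mathbb{R}$ is not computable, then no $\varphi \in \mathcal{E}$ has $\interpret{\varphi} = r$. That gives disjointness of $\mathbb{R}\setminus\mathtt{Comp}_\mathbb{R}$ from $EML$ at once, exactly as in the proof of Theorem \ref{thm:main}. For the measure claim, note that $\mathtt{Comp}_\mathbb{R}$ is countable, since there are countably many Turing machines. So its complement has full Lebesgue measure, by the same argument as the corollary to Lemma \ref{lemma:comp-syntax}. After that, the only remaining task is to check that each listed example is non-computable. Each then falls under the general statement.

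\emph{Halting-set real.} Let $h = \sum_{n\ge1}\chi_{HALT}(n)2^{-n}$. I would show that computability of $h$ yields a decision procedure for $HALT$. The only subtlety is that a real may have two binary expansions. Here $h$ is not a dyadic rational, because $HALT$ is infinite and co-infinite. Hence $h$ has a unique expansion, and from approximations to $h$ within $2^{-k}$ one can effectively recover the first $k-1$ bits. This works by computing approximations until the interval of uncertainty lies strictly inside a single dyadic interval of length $2^{-(k-1)}$, which must eventually happen because $h$ is not a dyadic endpoint. This decides $\chi_{HALT}(n)$ for every $n$, a contradiction.

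\emph{Specker limits.} A Specker sequence is by definition a computable, monotone, bounded sequence of rationals whose limit is not computable \cite{Weihrauch2000}. So non-computability here is immediate from the definition. It is essentially the $h$ example again, approximated from below by enumerating $HALT$.

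\emph{Martin-L\"of random reals.} I would cite \cite{Downey2010-wc} for the fact that no computable real is random. The reason is that if $x$ is computable, the sets $U_k = (q_k - 2^{-k}, q_k + 2^{-k})$, where $q_k$ is the computed approximation, form a Martin-L\"of test covering $x$.

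The main obstacle is the dyadic-ambiguity point in the halting-set example, which is handled by non-periodicity of $\chi_{HALT}$. Everything else is direct citation plus contraposition of Theorem \ref{thm:comp-eml-values}.
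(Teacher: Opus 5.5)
Your proposal is correct and takes the paper's route: the corollary is just the contrapositive of Theorem~\ref{thm:comp-eml-values}, exactly as in the last sentence of the proof of Theorem~\ref{thm:main}, and you supply what the paper leaves implicit, namely the countability of $\mathtt{Comp}_\mathbb{R}$ and the non-computability of each listed example. One small slip is that your $U_k$ has measure $2^{1-k}$ and may miss $x$ when $|q_k - x| = 2^{-k}$, so take $U_k = (q_{k+2} - 2^{-(k+1)},\, q_{k+2} + 2^{-(k+1)})$ instead to get a genuine Martin-L\"of test.
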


\bibliographystyle{acm}
\bibliography{bib}

\end{document}